\documentclass[12pt]{amsart}
\usepackage{amsmath}
\usepackage{graphicx}
\usepackage{amsfonts}
\usepackage{amstext}
\usepackage{amsbsy}
\usepackage{amsopn}
\usepackage{amsxtra}
\usepackage{upref}
\usepackage{amsthm}
\usepackage{amsmath}
\usepackage{amssymb}

\setlength{\textwidth}{5.8 in}\textwidth 5.8in
\setlength\hoffset{-2cm}
\vfuzz2pt 
\hfuzz2pt 

\newtheorem{thm}{Theorem}[section]
\newtheorem{cor}[thm]{Corollary}
\newtheorem{lem}[thm]{Lemma}

\theoremstyle{definition}
\newtheorem{defn}[thm]{Definition}
\theoremstyle{remark}
\newtheorem{rem}[thm]{Remark}
\newtheorem{ex}[thm]{Example}
\numberwithin{equation}{section}

\hyphenation{ope-ra-tor} \hyphenation{Schwar-zian}
\newcommand{\C}{\mathbb{C}}

\def\C{{\mathbb {C}}}
\def\B{{\mathbb B}}
\def\D{{\mathbb D}}

\def\S{{ S^{k}_{ij}}}

\begin{document}

\title {Prescribing the Preschwarzian in several complex variables.}

\author{Rodrigo Hern\'andez}
\thanks{The
author was partially supported by Fondecyt Grants \# 11070055.
\endgraf  {\sl Key words:} Schwarzian derivative, homeomorphic extension, ball, univalence,
convexity, Bergman metric.
\endgraf {\sl 2000 AMS Subject Classification}. Primary: 32H02, 32A17;\,
Secondary: 30C45.}
%

\begin{abstract}
We solve the several complex variables preSchwarzian operator
equation $[Df(z)]^{-1}D^2f(z)=A(z)$, $z\in \C^n$, where $A(z)$ is a
bilinear operator and $f$ is a $\C^n$ valued locally biholomorphic
function on a domain in $\C^n$. Then one can define a several
variables $f\to f_\alpha$ transform via the operator equation
$[Df_\alpha(z)]^{-1}D^2f_\alpha(z)=\alpha[Df(z)]^{-1}D^2f(z)$, and
thereby, study properties of $f_\alpha$. This is a natural
generalization of the one variable operator $f_\alpha(z)$ in
\cite{DSS66} and the study of its univalence properties, e.g., the
work of Royster \cite{Ro65} and many others. M\"{o}bius invariance
and the multivariables Schwarzian derivative operator of T. Oda
\cite{O} play a central role in this work.

\end{abstract}
\maketitle

\section{Introduction}
Consider the class $\mathcal S$ of functions $f$ holomorphic and
univalent in the disk $\D=\{z:|z|<1\}$ with the normalization
$f(0)=0$ and $f'(0)=1$.  Let $\alpha\in \C$, $f\in \mathcal S$ and
define the integral transform
\begin{equation}\label{f_alpha}f_\alpha(z)=\int_0^z\,[f'(w)]^{\alpha}\,dw\,,\end{equation}
where the power is defined by the branch of the logarithm for which
$\log f'(0)=0$, \cite{DSS66}. A question considered in \cite{DSS66}
is to determinate the values of $\alpha$ for which $f_\alpha \in
\mathcal S$. In \cite{Ro65} Royster exhibited non-univalent mappings
$f_\alpha$ for each complex $\alpha\neq 1$ with $|\alpha |>1/3$. In
fact, consider functions of the form
\begin{equation} \label{f_mu}f(z)=\exp(\mu\log(1-z))\,,\end{equation} which
are univalent if and only if $\mu$ lies in ones of the closed disks
$$|\mu+1|\leq 1\,,\;\;|\mu-1|\leq 1\,.$$ Royster showed that for any
such value of $\mu$, the function in (\ref{f_alpha}) is not
univalent for each $\alpha$ with$|\alpha|>1/3$ and $\alpha\neq 1$.
Moreover Pfaltzgraff, using the Ahlfors univalence criterion
\cite{Ah74},  proved that for any $f\in \mathcal{S}$, if
$|\alpha|\leq 1/4$ then $f_\alpha$ is univalent in
$\D$, see \cite{Pf75}. \\

Let $f$ be a locally univalent mapping in $\D$ and $f_\alpha$
defined by equation (\ref{f_alpha}) then $f_\alpha
'(z)=[f'(z)]^\alpha$, which implies that
$$\frac{f_\alpha''}{f_\alpha'}(z)=\alpha\frac{f''}{f'}(z)\,.$$ If
$f$ and $g$ satisfy that $g''/g'(z)=f''/f'(z)$ then
$\log(g'(z))=\log(f'(z))$ when $f'(0)=g'(0)$. Therefore $g=f$ if
$f(0)=g(0)$. Thus \begin{equation}
f_\alpha(z)=\int_0^z\,[f'(w)]^{\alpha}\,dw\;\;\Leftrightarrow\;\;
\frac{f_\alpha''}{f_\alpha'}(z)=\alpha\frac{f''}{f'}(z)\,.\end{equation}
This equivalence in one variable suggests our idea to define the
several variables generalization of $f_\alpha$ via operator equation
\begin{equation} [Df_\alpha(z)]^{-1}D^2f_\alpha(z)(\cdot,\cdot)=\alpha[Df(z)]^{-1}D^2f(z)(\cdot,\cdot)\, . \end{equation}

\noindent Yoshida developed, \cite{Y84}, a complete description of
prescribing Oda's Schwarzian derivatives \cite{O} in terms of a
completely integrable system of differential equations. The
description involves operators $\S f$ and $S^0_{ij}f$ of orders two
and three respectively, coefficients of the system and M\"{o}bius
invariants. In fact, the $\S f$ operators are the operator of least
order that vanish for M\"{o}bius mappings. This is a strong
difference with one complex variable where the third order
Schwarzian operator is the lowest order operator annihilated by all
M\"{o}bius mappings. For $n=1$, the M\"{o}bius group has dimension
$3$, which allows to set $f(z_0), f'(z_0)$ and $f''(z_0)$ of a
holomorphic mapping $f$ at a given point $z_0$ arbitrarily. It would
therefore be pointless to seek a M\"{o}bius invariant differential
operator of order 2. But for $n>1$ the number of parameters involved
in the value and all derivatives of order 1 and 2 of a locally
biholomorphic mapping is $n^2(n+1)/2+n^2+n$, and exceeds the
dimension of the corresponding M\"{o}bius group in $\C^n$, which is
$n^2+2n$. By the definition of the Schwarzian derivatives, we have
that $S^k_{ij}F=S^k_{ji}F$ for all $k$ and $\sum_{j=1}^nS^j_{ij}F=0$
and we see there are exactly $n(n-1)(n+2)/2$ independent terms
$S^k_{ij}F$, which is equal to the excess mentioned above.

A different approach to obtain the invariant operators $S_{ij}^k,\,
S_{ij}^0$ has been developed by Molzon and Tamanoi (\cite{MT}). In
addition, Molzon and Pinney had earlier developed
equivalent invariant operators in the context of complex manifolds (\cite{MP}). \\

The operator
$$P_f(z)= [Df(z)]^{-1}D^2f(z)(\cdot,\cdot)$$ introduced by Pfaltzgraff in \cite{Pf74} is the ``natural '' way to extend
the classical one variable operator preSchwarzian $f''/f' $.
Furthermore, the author in \cite{Pf74} extended the classical
univalence criterion of Becker, (\cite{B72}), to several variables.
The question now is how to extend the equation (\ref{f_alpha}) to
$\C^n$. It is necessary to understand when one can recover the
function $f$ from a given $P_f$. We shall show a strong connection
between this operator and the Schwarzian derivatives operator
$SF(z)(\cdot,\cdot)$, introduced in \cite{RH1}.  Indeed, the problem
of prescribing $P_f$ can be reduced to understanding how to
prescribe $\S f$ in terms of $P_f$. This is achieved via completely
integrable system generated by $\S f$ and corresponding ``new
differential conditions'' on the elements of $P_f$. We then use this
theory to extend the classical single variable problem about the
univalence of $f_\alpha$ by using equation (1.4) to define
$f_\alpha$ in several complex variables.

\section{Oda Schwarzian and M\"{o}bius Invariants}

Let $f:\Omega\subset\C^n\rightarrow\C^n$ be a locally biholomorphic
mapping defined on some domain $\Omega$. T.Oda in \cite{O} defined
the Schwarzian derivatives of $f=(f_1,\ldots,f_n)$ as
\begin{equation}\label{schwarzian-def}\S f= \displaystyle
\sum^{n}_{l=1}\frac{\partial^{2} f_{l}}{\partial z_{i}\partial
z_{j}} \frac{\partial z_{k}}{\partial
f_{l}}-\frac{1}{n+1}\left(\delta^{k}_{i}\frac{\partial}{\partial
z_{j}}+\delta^{k}_{j}\frac{\partial}{\partial z_{i}}\right) \log
J_f\, ,\end{equation} where $i,j,k=1,2, \ldots,n,$ $J_f$ is the
jacobian determinant of the diferential $Df$ and $\delta^{k}_{i}$
are the Kronecker symbols. For $n>1$ the Schwarzian derivatives have
the following properties:
\begin{equation}\label{mobius-propertie}\S f=0 \;\;\; \mbox{for
all}\quad i,j,k=1,2,\ldots ,n
 \;\;\;\mbox{iff}\;\;\;f(z)= M(z)\, ,\end{equation} for some
M\"{o}bius transformation
$$M(z)=\left(\frac{l_1(z)}{l_0(z)},\ldots,\frac{l_n(z)}{l_0(z)}\right)\, ,$$
where $l_i(z)=a_{i0}+a_{i1}z_1+\cdots+a_{in}z_n$ with
$\det(a_{ij})\neq 0$. Furthermore, for a composition

\begin{equation}\label{cadena}\S(g\circ f)(z)=\S f(z)+ \sum^{n}_{l,m,r=1}
S^{r}_{lm}g(w)\frac{\partial w_{l}} {\partial z_{i}}\frac{\partial
w_{m}} {\partial z_{j}} \frac{\partial z_{k}} {\partial w_{r}}\; ,
\; w=f(z) \, .\end{equation} From this chain rule it can be shown
that $S^k_{ij}f= S^k_{ij}g$ for all $i,j,k=1,\ldots,n$ if and only
if $g=T\circ f$ for some M\"{o}bius transformation. The $S^0_{ij}f$
coefficients are given by
$$S^0_{ij}f(z)=J_f^{1/(n+1)}\left(\frac{\partial^2}{\partial z_i\partial z_j}
J_f^{-1/(n+1)}-\sum_{k=1}^n\,\frac{\partial}{\partial
z_k}J_f^{-1/(n+1)}S^k_{ij}f(z)\right)\, .$$

In his work, Oda gives a description of the functions with
prescribed Schwarzian derivatives $\S f$ (\cite{O}). Consider the
following overdetermined system of partial differential equations,
\begin{equation}\label{sistema}
\frac{\partial^{2}u}{\partial z_{i}\partial z_{j}} =
\sum^{n}_{k=1}P^{k}_{ij}(z)\frac{\partial u}{\partial z_{k}}+
P^{0}_{ij}(z)u \; , \quad i,j= 1,2, \ldots,n\, ,\end{equation} where
$z=(z_{1},z_{2},...,z_n)\in \Omega\subset\C^{n}$ and $P^k_{ij}(z)$
are holomorphic functions for $i,j,k=0,\ldots,n$. The system
(\ref{sistema})  is called {\it completely integrable} if there are
at most $n+1$ linearly independent solutions, and is said to be in
{\it canonical form} (see \cite{Y76}) if the coefficients satisfy
$$\sum_{j=1}^{n} P_{ij}^{j}(z)=0 \; ,\quad i=1,2,\ldots,n.$$ T.
Oda proved that (\ref{sistema}) is a completely integrable system in
canonical form if and only if $P^k_{ij}=\S f$ for a locally
biholomorphic mapping $f=(f_1, \ldots, f_n)$, where $f_i=u_i/u_0$
for $1\leq i\leq n$ and $u_0, u_1,\ldots, u_n$ is a set of linearly
independent solutions of the system. For a given mapping $f$,
$u=\left(J_f\right)^{-\frac{1}{n+1}}$ is always a solution of
(\ref{sistema}) with $P_{ij}^k=\S f$.

\begin{defn} We define the \textit{Schwarzian derivative operator} as the
operator $S_f(z):T_z\Omega \to T_{f(z)}\Omega$ given by
$$S_ f(z)(\vec{v},\vec w)=\left(\, \vec{v}^{\,t}S^{1}f(z)\vec{w}\, ,
\,\ldots,\vec{v}^{\,t} S^{n}f(z)\vec{w}\, \right)\, ,$$ where $
S^{k} f$ is the $n\times n$ matrix defined by  $(\S f)_{ij}$ and
$\vec{v}\in T_z \Omega$.
\end{defn}

The Schwarzian derivative operator \cite{RH2} can be rewritten as
\begin{equation}\label{free-coord-u_0}S_f(z)(\vec v,\vec w)=[Df(z)]^{-1}D^2f(z)(\vec v,\vec
w)-\frac {1}{n+1}\left(\nabla \log J_f(z)\cdot \vec v\right)\vec
w-\frac{1}{n+1}\left(\nabla \log J_f(z)\cdot \vec w\right)\vec
v\,,\end{equation} and the system ($\ref{sistema}$) as

\begin{equation}\label{vectorial} \mbox{Hess} \,u(z)(\cdot,\cdot)=\nabla u(z)\cdot S_f(z)(\cdot,\cdot)+
S^0 _f(z)(\cdot,\cdot)u(z)\,,\end{equation} where $S^0 _f$ is a
$n\times n$ matrix defined by $(S^0_{ij}f)_{ij}$. We include in this
section two lemmas that complement the work of Oda.

\begin{lem}Let $f:\Omega\subset \C^n\to \C^n$ be a locally biholomorphic mapping and $u_0=J_f^{-1/n+1}$,
then
$$f=\frac{\vec u}{u_0}=\left(\frac{u_1}{u_0},\ldots,\frac{u_n}{u_0}\right)\,,$$ where $u_0,u_1,\ldots,u_n$ are linearly independent solutions of (\ref{sistema})
\end{lem}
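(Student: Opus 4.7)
The plan is to combine two ingredients: the fact (stated just before the lemma) that $u_0 = J_f^{-1/(n+1)}$ solves the system (\ref{sistema}) with coefficients $P^k_{ij} = S^k_{ij}f$, and a direct verification that the products $u_l := f_l \, u_0$ for $l=1,\dots,n$ solve the same system, followed by a short linear-independence argument.

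First I would check that each $u_l = f_l u_0$ satisfies (\ref{sistema}). Applying Leibniz,
$$\frac{\partial^2 u_l}{\partial z_i \partial z_j} = u_0 \frac{\partial^2 f_l}{\partial z_i \partial z_j} + \frac{\partial f_l}{\partial z_i}\frac{\partial u_0}{\partial z_j} + \frac{\partial f_l}{\partial z_j}\frac{\partial u_0}{\partial z_i} + f_l \frac{\partial^2 u_0}{\partial z_i \partial z_j}.$$
Since $u_0$ already solves the system, the last term becomes $f_l\bigl(\sum_k S^k_{ij}f\,\partial_k u_0 + S^0_{ij}f \cdot u_0\bigr)$. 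So it suffices to show that
$$u_0\,\frac{\partial^2 f_l}{\partial z_i \partial z_j} + \frac{\partial f_l}{\partial z_i}\frac{\partial u_0}{\partial z_j} + \frac{\partial f_l}{\partial z_j}\frac{\partial u_0}{\partial z_i} \;=\; u_0 \sum_{k=1}^n S^k_{ij}f \cdot \frac{\partial f_l}{\partial z_k}.$$
The key identity here is obtained by plugging the Oda definition (\ref{schwarzian-def}) into $\sum_k S^k_{ij}f \cdot \partial_k f_l$ and using $\sum_k \partial f_l/\partial z_k \cdot \partial z_k/\partial f_m = \delta_{lm}$, which yields
$$\sum_{k=1}^n S^k_{ij}f \cdot \frac{\partial f_l}{\partial z_k} = \frac{\partial^2 f_l}{\partial z_i \partial z_j} - \frac{1}{n+1}\Bigl(\partial_j\log J_f \cdot \partial_i f_l + \partial_i\log J_f \cdot \partial_j f_l\Bigr).$$
The cross terms then match because $\partial_k u_0 = -\tfrac{u_0}{n+1}\,\partial_k \log J_f$, which is immediate from $u_0 = J_f^{-1/(n+1)}$.

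For linear independence, suppose $c_0 u_0 + c_1 u_1 + \cdots + c_n u_n \equiv 0$. Since $u_0 = J_f^{-1/(n+1)}$ is nonvanishing on $\Omega$, dividing gives $c_0 + c_1 f_1 + \cdots + c_n f_n \equiv 0$. Differentiating in each $z_k$ yields $\sum_{l=1}^n c_l \,\partial f_l/\partial z_k = 0$, a linear system with coefficient matrix $Df(z)$. Since $f$ is locally biholomorphic, $Df$ is invertible, forcing $c_1 = \cdots = c_n = 0$, and then $c_0 = 0$.

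The main obstacle is bookkeeping rather than conceptual: one must unwind Oda's definition carefully and keep track of which indices are free and which are summed. Once the identity for $\sum_k S^k_{ij}f \cdot \partial_k f_l$ is in hand, everything collapses, and the final identity $f = \vec u / u_0$ is just the tautology $f_l = (f_l u_0)/u_0 = u_l/u_0$.
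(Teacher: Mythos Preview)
Your proof is correct and follows essentially the same route as the paper: set $u_l=f_lu_0$, expand $\partial_i\partial_j u_l$ by Leibniz, replace the $u_0$-term using that $u_0$ already solves the system, and then cancel the remaining terms via the defining relation for $S^k_{ij}f$. The only cosmetic difference is that the paper carries this out in the operator form (\ref{free-coord-u_0}) rather than unwinding Oda's coordinate definition (\ref{schwarzian-def}) as you do, and you supply the linear-independence argument that the paper leaves implicit.
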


\begin{proof} We will prove that $\vec u=fu_0$ is solution of the equation (\ref{vectorial}). It follows that $Dfu_0+f\nabla u_0=Du$, from where $$D^2f\cdot u_0+2Df\cdot\nabla u_0+f\cdot\mbox{Hess}\, u_0=D^2u\,.$$ Using the system we have that
$$D^2f\cdot u_0+2Df\cdot \nabla u_0-Df\cdot u_0\cdot S_f+Du\cdot S_f+S^0_f\cdot u=D^2 u\,.$$ Considering the equation (\ref{free-coord-u_0}) with $u_0=J_f^{-1/n+1}$ we have that $$D^2f\cdot u_0+2Df\cdot \nabla u_0-Df\cdot Sf\cdot u_0=0\,,$$ and $D^2u(\cdot,\cdot)=Du(S_f(\cdot,\cdot))+S^0_f(\cdot,\cdot)u$, hence $u_i$ with $i=1,\ldots,n$ and $u_0$ are independent solutions of the system (\ref{sistema}).

\end{proof}

\begin{lem}\label{2.3} Let $u_0$ be a solution of the system (\ref{sistema}). Then there exists a function $f=\vec u/u_0$
where $\vec u=(u_1,\ldots,u_n)$ and $u_i$ with $i=0,1,\ldots,n$ are
independent solutions of the system (\ref{sistema}) where
$u_0=J_f^{-1/n+1}$. The function $f$ will be holomorphic away from
the zero set of $u_0$.
\end{lem}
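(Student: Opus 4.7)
\emph{Proof plan.} The plan is to establish the converse of Lemma 2.2 by adjusting a reference biholomorphic mapping, furnished by Oda's theorem, through a suitable M\"{o}bius transformation, so that its Jacobian matches the prescribed solution $u_{0}$.

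First I would invoke Oda's theorem: since the system (\ref{sistema}) is (in the present context) completely integrable in canonical form, there exists a locally biholomorphic $g:\Omega\to\C^{n}$ whose Schwarzian derivatives satisfy $S^{k}_{ij}g = P^{k}_{ij}$. By Lemma 2.2 applied to $g$, the functions
$$v_{0} = J_{g}^{-1/(n+1)}, \qquad v_{i} = g_{i}\,v_{0} \quad (i=1,\ldots,n),$$
form a basis of the $(n+1)$-dimensional solution space of (\ref{sistema}). Expanding the given $u_{0}\not\equiv 0$ in this basis produces a non-zero vector $(c_{0},c_{1},\ldots,c_{n})$ with
$$u_{0} \,=\, c_{0}v_{0}+c_{1}v_{1}+\cdots+c_{n}v_{n} \,=\, \ell(g)\,v_{0}, \qquad \ell(w) := c_{0}+c_{1}w_{1}+\cdots+c_{n}w_{n}.$$

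The second step is to build $f$ as $f = M\circ g$ for a M\"{o}bius transformation $M$ whose denominator is $\ell$. Extend $(c_{0},\ldots,c_{n})$ to an invertible $(n+1)\times(n+1)$ matrix $A$ with $\det A = 1$ having this vector as its zeroth row, and set $M(w) = (\ell_{1}(w)/\ell(w),\ldots,\ell_{n}(w)/\ell(w))$ using the remaining rows of $A$. Since $M$ is M\"{o}bius, (\ref{mobius-propertie}) and the chain rule (\ref{cadena}) give $S^{k}_{ij}f = S^{k}_{ij}g = P^{k}_{ij}$, so $f$ solves the same system. Using the classical identity $J_{M}(w) = \det(A)/\ell(w)^{n+1}$ for projective-linear maps, together with $\det A = 1$, I then compute
$$J_{f}^{-1/(n+1)} \,=\, J_{M}(g)^{-1/(n+1)}\cdot J_{g}^{-1/(n+1)} \,=\, \ell(g)\cdot v_{0} \,=\, u_{0},$$
as required.

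Finally, applying Lemma 2.2 to this $f$ produces $n$ further independent solutions $u_{i} = f_{i}u_{0}$ of (\ref{sistema}) together with $u_{0} = J_{f}^{-1/(n+1)}$. The composition $f = M\circ g$ fails to be holomorphic precisely where $\ell(g) = 0$, which is exactly the zero set of $u_{0}$. The one delicate point is the Jacobian identity for the M\"{o}bius map combined with the choice of branch for the $(n+1)$-st root; both are pinned down by imposing $\det A = 1$, after which the rest of the argument is a direct assembly of Oda's theorem and Lemma 2.2.
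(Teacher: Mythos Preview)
Your proposal is correct and follows essentially the same route as the paper: start from a reference locally biholomorphic map (the paper calls it $F$, you call it $g$) furnished by Oda's theorem and Lemma~2.2, expand the given $u_{0}$ in the associated basis $\{v_{0},\ldots,v_{n}\}$, and then post-compose with a M\"{o}bius transformation whose denominator is the resulting affine form so that $J_{f}^{-1/(n+1)}=u_{0}$. Your version is a bit more explicit about the normalization $\det A=1$ and the Jacobian identity for projective-linear maps, but the argument is the same.
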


\begin{proof} According to previous lemma we can find $F=\vec v/v_0$ where $\{v_0,v_1,\ldots,v_n\}$ are a linearly independent solutions of the system (\ref{sistema}) with $P^k_{ij}=S^k_{ij}$ and $v_0=J_F^{-1/n+1}$. As $u_0$ is solution of the system we have that $u_0=\alpha_0v_0+\cdots+\alpha_nv_n$.  We need to find a M\"obius mapping $T$ such that
$$T\circ F=\left(\frac{u_1}{u_0},\ldots,\frac{u_n}{u_0}\right)=f\,,$$ and $J_{T\circ F}^{-1/n+1}=u_0$. We have
$$\begin{array}{ccl} J_{T\circ F}^{-1/n+1}(z) & = & J_T^{-1/n+1}(F(z))J_F^{-1/n+1}(z)\\[0.3cm]
 & = & (\lambda_0+\lambda_1F_1(z)+\cdots+\lambda_nF_n(z))J_F^{-1/n+1}(z)\\[0.3cm]
  & = & \lambda_0v_0+\lambda_1v_1+\cdots+\lambda_nv_n\,, \end{array}$$ which will be
  equal to $u_0$ if we choose $\lambda_i=\alpha_i$ for all $i=0,1,\ldots,n\,.$

\end{proof}

 \section{Results}
Let $\Omega\subset \C^n$ be domain.

\begin{thm} Let $f:\Omega\to\C^n$ be a locally biholomorphic mapping. The following statements are equivalent:\\
\begin{enumerate}
\item[(i)] $S^0_{ij}f(z)\equiv 0$.\\
\item[(ii)] There exists a locally biholomorphic mapping $g:\Omega\to\C^n$  with $S_g=S_f$ and $J_g$ constant.\\
\item[(iii)] There exists a locally biholomorphic mapping $h:\Omega\to\C^n$  such that $S_h=S_f$ and $J_h^{-1/n+1}=1/L(h)$, where $L(w)=\alpha_0+\alpha_1w_1+\cdots+\alpha_nw_n$.\\
\item[(iv)] Locally there exists a biholomorphic change of variables such that the system (\ref{sistema}) with $P^k_{ij}=\S f$ reduces to $\mbox{Hess}(u)=0$.\\
\end{enumerate}
\end{thm}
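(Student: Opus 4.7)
My strategy rests on a single key observation: $S^0_{ij}f \equiv 0$ is equivalent to the constant function $u \equiv 1$ being a solution of the $f$-system (\ref{sistema}) with $P^k_{ij}=\S f$ and $P^0_{ij}=S^0_{ij}f$. Indeed, substituting $u \equiv 1$ into the equivalent vectorial form (\ref{vectorial}) collapses the Hessian and gradient terms, leaving $S^0_f \cdot 1 = 0$. With this in hand I plan to run the cycle (i) $\Rightarrow$ (ii) $\Rightarrow$ (iii) $\Rightarrow$ (i) and then prove (i) $\Leftrightarrow$ (iv) separately.

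For (i) $\Rightarrow$ (ii), I apply Lemma \ref{2.3} to the solution $u_0 \equiv 1$ to produce a locally biholomorphic $g$ with $J_g^{-1/(n+1)} = 1$ and $S_g = S_f$; then $J_g \equiv 1$ is the required constant Jacobian. The implication (ii) $\Rightarrow$ (iii) is then immediate: take $h = g$ and the constant linear functional $L(w) \equiv 1$ (i.e.\ $\alpha_0 = 1$ and $\alpha_i = 0$ for $i \geq 1$).

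The heart of the argument is (iii) $\Rightarrow$ (i). Given $h$ with $S_h = S_f$ and $J_h^{-1/(n+1)} = 1/L(h)$, Lemma 2.1 furnishes linearly independent solutions $u_0 = 1/L(h)$ and $u_i = h_i/L(h)$ of the $h$-system. The combination $\alpha_0 u_0 + \sum_{i \geq 1} \alpha_i u_i = L(h)/L(h) = 1$ is therefore a solution, and the opening observation applied to $h$ yields $S^0_h \equiv 0$. To transfer this conclusion to $f$, I use that $S_h = S_f$ forces $h = T \circ f$ for some M\"obius $T$ (via (\ref{cadena}) and (\ref{mobius-propertie})), together with the classical identity $J_T(w)^{-1/(n+1)} = c\cdot l_0(w)$, where $l_0$ is the denominator of $T$. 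Consequently $J_h^{-1/(n+1)} = c\,l_0(f)\,J_f^{-1/(n+1)}$ is a linear combination of the fundamental solutions $J_f^{-1/(n+1)}$ and $f_i J_f^{-1/(n+1)}$ of the $f$-system. Thus the nowhere-vanishing function $J_h^{-1/(n+1)}$ satisfies both the $f$- and $h$-systems; since these share the same $P^k_{ij}$, subtraction forces $S^0_f = S^0_h = 0$.

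Finally, for (i) $\Leftrightarrow$ (iv): the direction (iv) $\Rightarrow$ (i) is immediate, since $u \equiv 1$ solves $\mbox{Hess}(u)=0$ and pulls back to a solution of the original system, from which the observation gives $S^0_f = 0$. For (i) $\Rightarrow$ (iv), the $g$ from (ii) provides a local biholomorphism $w = g(z)$, under which the solution space $\{1, g_1, \ldots, g_n\}$ of the $f$-system is carried to $\{1, w_1, \ldots, w_n\}$, the solution space of $\mbox{Hess}_w(u)=0$; since a completely integrable system in canonical form is uniquely determined by its $(n+1)$-dimensional solution space, the transformed system must be $\mbox{Hess}_w(u)=0$. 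The main obstacle I expect is the M\"obius computation forcing $S^0_f = S^0_h$ in (iii) $\Rightarrow$ (i); the remaining steps are essentially bookkeeping around Lemmas 2.1 and \ref{2.3}.
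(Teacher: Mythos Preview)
Your proposal is correct and hinges on the same pivot as the paper---the observation that $S^0_{ij}f\equiv 0$ exactly when the constant function solves the $f$-system, together with Lemma~\ref{2.3}---but the implications are organised differently. The paper runs the single cycle (i)$\to$(ii)$\to$(iii)$\to$(iv)$\to$(i), whereas you close the shorter loop (i)$\to$(ii)$\to$(iii)$\to$(i) and handle (i)$\Leftrightarrow$(iv) on the side. Two steps differ in substance. Your (ii)$\Rightarrow$(iii) is the trivial choice $h=g$, $L\equiv 1$; the paper instead composes with an arbitrary M\"obius map, which explains why (iii) is stated for a general affine $L$. Your (iii)$\Rightarrow$(i)---noting that $\alpha_0 u_0+\sum_i\alpha_i u_i=L(h)/L(h)=1$ solves the $h$-system, whence $S^0_h=0$, and then transferring this to $f$ via $h=T\circ f$ and the identity $J_T^{-1/(n+1)}=c\,l_0$---is a clean direct argument that the paper does not give; the paper reaches (i) only after passing through (iv). For (i)$\Rightarrow$(iv) the paper performs the explicit computation $D\bigl(\nabla u\,(Dg)^{-1}\bigr)=0$ when $J_g\equiv 1$, while you invoke the fact that a system of the form~(\ref{sistema}) is uniquely determined by its $(n{+}1)$-dimensional solution space; both are valid, though your version should record that the change of variables $w=g(z)$ preserves the shape of the system so that the uniqueness claim applies. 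One minor correction: what you call ``Lemma~2.1'' is Lemma~2.2 in the paper's numbering (Definition~2.1 occupies the first slot).
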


\begin{proof} $(i)\rightarrow (ii)$. As $S^0_{ij}f\equiv 0$, the system (\ref{sistema}) reduces to $$u_{ij}=\sum_{k=1}^n\S u_k\,.$$
Therefore $u\equiv c$ is solution, thus by Lemma (\ref{2.3}) there exists a function $g$ such that $J_g\equiv C$.\\

$(ii)\to (iii)$. Let $g=T\circ h$ for some M\"{o}bius $T$ to be
determined. Then
$J_g^{-1/n+1}(z)=J_T^{-1/n+1}(h(z))J_h^{-1/n+1}(z)$. Since
$J_g^{-1/n+1}\equiv C$ we have that
$$C=(a_0+a_1h_1+\cdots+a_nh_n)J_h^{-1/n+1}(z)\,,$$ from where the result obtains after scaling $h$.\\

$(iii)\to (iv)$. Suppose $h$ has $J_h^{-1/n+1}=1/L(h)$. The previous
argument shows that by choosing $T$ appropirately, we can produce
$g=T\circ h$ with  $J_g\equiv 1$. Hence
$S_g(z)(\cdot,\cdot)=(Dg(z))^{-1}D^2g(z)(\cdot,\cdot)$, and the
system (\ref{sistema}) reduces to $$\mbox{Hess
}u(z)(\cdot,\cdot)=\nabla u(z)\cdot S_g(z)(\cdot,\cdot)\,.$$ We
consider $D(\nabla u(z)(Dg(z))^{-1})(\cdot,\cdot)$:
$$\begin{array}{cll}D(\nabla u(z)(Dg(z))^{-1}(\cdot,\cdot)& = &\mbox{Hess }u(z)((Dg(z))^{-1}(\cdot),\cdot)\\[0.2cm]
& & -\nabla u(z)\cdot
(Dg(z))^{-1}D^2g(z)((Dg(z))^{-1}(\cdot),\cdot)\\ [0.2cm]
 &= & \nabla u(z)\cdot S_g(z)((Dg(z))^{-1}(\cdot),\cdot)\\ [0.2cm]
 & & -\nabla u(z)\cdot (Dg(z))^{-1}D^2g(z)((Dg(z))^{-1}(\cdot),\cdot)=0\,.\end{array}$$

Let $\varphi$ a local inverse of $g$. Therefore $U(w)=u(\varphi(w))$ satisfies that $\nabla U=\nabla u\cdot D\varphi=\nabla u(z)(Dg(z))^{-1}$, thus $\mbox{Hess }U(w)\equiv 0$. \\

$(iv)\to (i)$. Since $\mbox{Hess } u(s)\equiv 0$, then $u\equiv c$
is a solution of this system (\ref{sistema}) therefore
$S^0_{ij}f\equiv 0$.
\end{proof}

\begin{thm} Let $f:\Omega\to\C^n$ be a locally biholomorphic mapping. There exists a function $g:\Omega\to\C^n$
locally biholomorphic such that \begin{equation}
Dg(z)=Df(z)J_f^{-\frac{2}{n+1}}\,\end{equation} if and only if
$S^0_{ij}f\equiv 0$ for all $i$ and $j$. The function $g$ will have
$S_g=S_f$.
\end{thm}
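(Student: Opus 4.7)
My plan is to attack both directions via the integrability (equality of mixed partial derivatives) associated with the matrix PDE, and then to derive $S_g = S_f$ as a direct consequence.

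For the forward direction, suppose $g$ is locally biholomorphic with $Dg = Df\cdot\lambda$, where $\lambda = J_f^{-2/(n+1)}$. Equating $\partial_k\partial_j g_i = \partial_j\partial_k g_i$ and using the symmetry $\partial_k\partial_j f_i = \partial_j\partial_k f_i$, I obtain the integrability relation
$$(\partial_j f_i)(\partial_k\lambda) = (\partial_k f_i)(\partial_j\lambda), \qquad i,j,k=1,\dots,n.$$
Expanding $\partial_k\lambda = -\tfrac{2}{n+1}\lambda\,\partial_k\log J_f$ and contracting with $(Df)^{-1}$ transforms this into a scalar system for $u_0 := J_f^{-1/(n+1)}$. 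The plan is then to compare with the defining formula
$$S^0_{ij}f = J_f^{1/(n+1)}\Bigl(\partial_i\partial_j J_f^{-1/(n+1)} - \sum_{k=1}^n \partial_k J_f^{-1/(n+1)}\, S^k_{ij}f\Bigr),$$
showing that, in view of the canonical condition $\sum_j S^j_{ij}f = 0$, the integrability relation is equivalent to $S^0_{ij}f \equiv 0$.

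For the converse, assume $S^0_{ij}f \equiv 0$. Then $u_0 = J_f^{-1/(n+1)}$ is a distinguished solution of the reduced Oda system (\ref{sistema}) with $P^0_{ij} = 0$. Theorem 3.1 (ii) provides a locally biholomorphic $h$ with $S_h = S_f$ and $J_h$ constant; since $S_h = S_f$, one has $h = T\circ f$ for a Möbius $T$, and equating $J_h$ constant with $J_T(f)J_f$ together with the explicit expression of $J_T$ in terms of the affine data of $T$ yields the scalar identity $DT(f) = c\, J_f^{-2/(n+1)}\, I$. Rescaling $h$ absorbs the constant $c$ and produces the desired $g$. Once the PDE holds, $S_g = S_f$ follows by direct computation: $J_g = \lambda^n J_f = J_f^{(1-n)/(n+1)}$ gives $\nabla\log J_g = \tfrac{1-n}{n+1}\nabla\log J_f$; the integrability forces $D^2 g = \lambda\, D^2 f$ modulo terms that vanish after contracting with $(Dg)^{-1}$; and substitution into the defining formula (\ref{free-coord-u_0}) for $S_g$ produces $S_g = S_f$ after cancellation of the $\nabla\log J_f$ contributions.

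The main obstacle will be the forward-direction bridge: showing that the integrability relation $(\partial_j f_i)(\partial_k \lambda) = (\partial_k f_i)(\partial_j \lambda)$, which a priori involves $Df$ and $\nabla\log J_f$, is equivalent to the scalar PDE $S^0_{ij}f \equiv 0$. The canonical (trace-free) form of the Oda system and Lemma 2.2, which identifies $u_0 = J_f^{-1/(n+1)}$ as a solution, should be the essential tools; I expect the identification to pass through the equivalence $(i)\Leftrightarrow(iv)$ of Theorem 3.1, which reduces the system to $\mathrm{Hess}(u)=0$ and thereby permits the closed 1-form $\lambda\, Df$ to be integrated directly.
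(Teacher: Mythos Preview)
Your forward direction starts with the same integrability computation as the paper --- equating mixed partials of $g_i$ yields $(\partial_j f_i)(\partial_k\lambda)=(\partial_k f_i)(\partial_j\lambda)$ for all $i,j,k$, i.e.\ $d\lambda\wedge df^i=0$ for every $i$ --- but you then set yourself an unnecessary ``main obstacle.'' Since $Df$ is invertible, the $df^i$ form a local coframe, and the relations $d\lambda\wedge df^i=0$ for all $i$ force $d\lambda=0$ immediately (write $d\lambda=\sum_j a_j\,df^j$ and wedge). Thus $J_f$ is locally constant, and then $S^0_{ij}f\equiv 0$ follows at once either from the explicit formula for $S^0_{ij}f$ (both terms vanish) or from Theorem~3.1\,$(ii)\Rightarrow(i)$ with $g=f$. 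This is exactly the paper's route; there is no need to pass through the canonical trace condition or the equivalence $(i)\Leftrightarrow(iv)$.

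Your converse, however, has a genuine gap. From Theorem~3.1 you obtain $h=T\circ f$ with $T$ M\"obius and $J_h$ constant, and you then assert that ``the explicit expression of $J_T$ \ldots\ yields the scalar identity $DT(f)=c\,J_f^{-2/(n+1)}I$.'' This is false for $n\geq 2$: a M\"obius transformation $T(w)=\bigl(l_1(w)/l_0(w),\ldots,l_n(w)/l_0(w)\bigr)$ has $J_T(w)=\det(a_{ij})\,l_0(w)^{-(n+1)}$, so the condition $J_T(f)\,J_f=\text{const}$ only pins down $l_0\circ f$; it says nothing about the full matrix $DT(f)$, which is \emph{not} a scalar multiple of the identity unless $T$ is affine. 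So you cannot conclude $Dh=\lambda\,Df$ this way. The paper's converse instead argues directly from Lemma~\ref{2.3}: since $S^0_{ij}f\equiv 0$, the constant function solves the system, so Lemma~\ref{2.3} produces $g$ with $S_g=S_f$ and $J_g^{-1/(n+1)}$ equal to that constant; then $J_g$ constant gives $P_g=S_g=S_f$ by (\ref{free-coord-u_0}). (The paper's own converse is admittedly terse about recovering the specific relation $Dg=Df\,J_f^{-2/(n+1)}$, but your proposed mechanism for it does not work.)
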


\begin{proof} Suppose (3.1) holds. A straightforward calculation shows that $$(Dg(z))^{-1}D^2g(z)(v,v)=S_f(z)(v,v)\,.$$
The coordinate functions $g^{i}$ of function $g$ satisfy
$$dg^{i}=J_f^{-2/n+1}df^{i}\,. $$ Since $0=d^2g^{i}=d^2f^{i}$ we conclude that
$J_f$ must be a constant. By Theorem 3.1 we conclude that
$S^0_{ij}f\equiv 0$ for all $i$ and $j$. Reciprocally, if
$S^0_{ij}f\equiv 0$ then there exists a constant solution of the
system (2.4), and by Lemma 2.2 there exists a mapping $g$ with
$S_g=S_f$ and $J_g^{-1/n+1}\equiv C$.  By (2.5), $S_g=P_g=S_f$.
\end{proof}

\begin{rem}Considering $S^0_{ij}f\equiv 0$ then $cDf=Dg$ for some constant $c$. When $c=J_f^{-2/n+1}$ we have that
$$P_g(z)=S_f(z)=P_f(z)\,.$$
\end{rem}

M.A. Goldberg in \cite{Gold} showed that, in terms of the our
operator,
\begin{equation}\label{Goldberg}  tr\,\{Df(z)^{-1}D^2f(z)(\vec v_i,\cdot)\}=\frac{\partial }{\partial z_i}\,\log J_f(z) \,,\end{equation}
where $\vec v_i=(0,\ldots,1,\ldots,0)$ with 1 is in position $i$. We
use this result to prove the next theorem of uniqueness.

\begin{thm} Let $f,g$ be locally biholomorphic mappings defined in $\Omega$. Then $P_f(z)(\cdot,\cdot)=P_g(z)(\cdot,\cdot)$ if and only if $f=T\circ g$, where
$T(z)=Az+b$ with $A$ is a $n\times n$ constant matrix and $b\in
\C^n$.
\end{thm}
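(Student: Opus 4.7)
The plan is to prove the easy direction by direct chain-rule computation and the nontrivial direction by combining Goldberg's identity (\ref{Goldberg}), the decomposition (\ref{free-coord-u_0}), and the Möbius characterization of $S_f$ from Section 2.

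\textbf{Easy direction.} Suppose $f=T\circ g$ with $T(w)=Aw+b$ and $A$ an invertible constant matrix. Then $Df(z)=A\,Dg(z)$ and $D^2f(z)(\cdot,\cdot)=A\,D^2g(z)(\cdot,\cdot)$, so
$$P_f(z)(\cdot,\cdot)=[A\,Dg(z)]^{-1}A\,D^2g(z)(\cdot,\cdot)=[Dg(z)]^{-1}D^2g(z)(\cdot,\cdot)=P_g(z)(\cdot,\cdot).$$

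\textbf{Hard direction.} Assume $P_f\equiv P_g$. First I would apply Goldberg's formula (\ref{Goldberg}) coordinatewise: for each $i$,
$$\frac{\partial}{\partial z_i}\log J_f(z)=\operatorname{tr}\{P_f(z)(\vec v_i,\cdot)\}=\operatorname{tr}\{P_g(z)(\vec v_i,\cdot)\}=\frac{\partial}{\partial z_i}\log J_g(z).$$
Thus $\nabla\log J_f\equiv \nabla\log J_g$ on $\Omega$. Plugging this, together with $P_f=P_g$, into the identity (\ref{free-coord-u_0}) gives $S_f(z)(\cdot,\cdot)=S_g(z)(\cdot,\cdot)$ on $\Omega$, so by the Möbius characterization following (\ref{cadena}) there is a Möbius transformation $M$ with $f=M\circ g$.

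\textbf{Forcing $M$ to be affine.} It remains to show that the Möbius transformation $M$ from the previous step must be affine. For this I would compute $P_{M\circ g}$ by the chain rule for first and second derivatives:
$$D(M\circ g)=DM(g)\,Dg,\qquad D^2(M\circ g)(v,w)=D^2M(g)(Dg\,v,Dg\,w)+DM(g)\,D^2g(v,w),$$
so
$$P_{M\circ g}(z)(v,w)=[Dg(z)]^{-1}P_M(g(z))(Dg(z)v,Dg(z)w)+P_g(z)(v,w).$$
The assumption $P_f=P_{M\circ g}=P_g$ therefore forces $P_M(g(z))\equiv 0$ on $g(\Omega)$, i.e.\ $D^2M\equiv 0$ on the open set $g(\Omega)$. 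Each component of $M$ is rational of the form $l_k/l_0$; a rational function with identically vanishing second derivatives on an open set is affine, so $l_0$ must be a (nonzero) constant and $M(w)=Aw+b$ with $A$ invertible, as required.

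The only mildly delicate point is the last step, where one has to note that $P_M=0$ implies $M$ is affine rather than merely linear fractional; everything else is a direct consequence of the machinery already established in Sections 2 and 3.
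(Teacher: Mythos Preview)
Your proof is correct and follows essentially the same route as the paper: Goldberg's identity yields $\nabla\log J_f=\nabla\log J_g$, then (\ref{free-coord-u_0}) gives $S_f=S_g$, and the M\"obius characterization supplies the M\"obius map. The only difference is in the last step: you use the chain rule for $P$ to obtain $P_M\equiv 0$ and hence $D^2M\equiv 0$, whereas the paper instead applies the Jacobian chain rule $\log J_g=\log J_T(f)+\log J_f$ together with $\nabla\log J_f=\nabla\log J_g$ to conclude that $J_T$ is constant, which for a M\"obius transformation forces $T$ to be affine; both arguments are equally brief and valid.
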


\begin{proof}  Let $f$ and $g$ be locally biholomorphic mappings in $\Omega$. As $P_f(z)(\vec v_i,\cdot)=P_g(z)(\vec v_i,\cdot)$  for all $i=1,\ldots,n$ then by equation (\ref{Goldberg}) we have that
\begin{equation}\label{3.2}\nabla \log J_f(z)=\nabla \log J_g(z)\,.\end{equation} Using equation (\ref{free-coord-u_0}) we can conclude that $S_f(z)=S_g(z)$ for all $z$. Hence $g=T\circ f$ for some Mobius mapping $T$. But $\log J_g(z)=\log J_T(f(z))+\log J_f(z)$ and equation (\ref{3.2}) we have that $\log J_T(z)$ is a constant, therefore $T(z)=Az+b$ for some $n\times n$ matrix $A$ and $b\in \C^n$. Reciprocally, if $f=T\circ g$ with $T(z)=Az+b$ for some $n\times n$ matrix $A$ and $b\in \C^n$, is easy to see that $Df(z)=DT(f(z))Df(z)=ADf(z)$, which implies that $P_f(z)=P_g(z)$.
\end{proof}

\begin{thm} Let $A(z)$ be a bilinear operator defined in $\Omega$ by $$A(z)(\vec v,\cdot)= \left(
\begin{array}{cccccc}
a^1_{11}v_1+\cdots +a^1_{1n}v_n & \cdot  & \cdot & \cdot &
\cdot & a^1_{n1}v_1+\cdots +a^1_{nn}v_n  \\
\cdot & \cdot & \cdot & \cdot &
\cdot & \cdot \\
\cdot & \cdot & \cdot & \cdot & \cdot & \cdot \\
\cdot & \cdot & \cdot & \cdot & \cdot & \cdot \\
a^n_{11}v_1+\cdots +a^n_{1n}v_n  & \cdot & \cdot & \cdot
& \cdot & a^n_{n1}v_1+\cdots +a^n_{nn}v_n %
\end{array}%
\right)$$ where $a^k_{ij}=a^k_{ij}(z)$ and $\vec v=(v_1\ldots,v_n)$.
Then there exists a function $f:\Omega\to\C^n$ locally biholomorphic
such that $P_f(z)=A(z)$ if and only if the following statements
hold:
\begin{enumerate}
\item [(i)] $a^k_{ij}(z)=a^k_{ji}(z)$ for all $i,j,k=1,\ldots,n.$;\\

\item [(ii)] there exists a  holomorphic function $\varphi:\Omega\to\C$ such that $$a^1_{1j}(z)+a^2_{2j}(z)+\cdots+a^n_{nj}(z)=
\frac{\partial \varphi}{\partial z_j}(z)\;\;\; \forall
j=1,\ldots,n\,;$$

\item [(iii)]$\exp(-\displaystyle\frac{\varphi}{n+1})$ is a solution of the system (\ref{sistema}) with $P^k_{ij}(z)$ given by
$$P^k_{ij}(z)=a^k_{ij}(z)-\frac{1}{n+1}\left(\delta^k_i\, tr\,\{A(z)(\vec v_j,\cdot)+\delta^k_j\, tr\,\{A(z)(\vec v_i,\cdot)\right)\,\, i,j,k=1,\ldots,n,$$ and $P^0_{ij}(z)$ are defined in terms of $P^k_{ij}(z)$ such that the integrable condition of the system (\cite{Y84} page 129-130) holds.
\end{enumerate}
\end{thm}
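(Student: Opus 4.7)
The plan is to prove necessity and sufficiency separately, with the key translation being that the preSchwarzian $P_f$ differs from the Oda--Schwarzian operator $S_f$ by an explicit gradient-of-jacobian term (formula (\ref{free-coord-u_0})), so prescribing $P_f$ amounts to prescribing $S_f$ \emph{together with} a compatible choice of $\log J_f$.

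For necessity, suppose $P_f = A$. Condition (i) is immediate from the symmetry of $D^2 f$. For (ii), I read off from the matrix definition that $\mathrm{tr}\{A(z)(\vec v_j,\cdot)\}=\sum_{k=1}^n a^k_{kj}(z)$, and Goldberg's identity (\ref{Goldberg}) then gives
$$\sum_{k=1}^n a^k_{kj}(z) \;=\; \frac{\partial}{\partial z_j}\log J_f(z),$$
so $\varphi := \log J_f$ (a well-defined holomorphic function once a branch is fixed, or at least locally) does the job. For (iii), I substitute $P_f = A$ into (\ref{free-coord-u_0}) and use Goldberg again to write
$$S^k_{ij}f(z) \;=\; a^k_{ij}(z) - \frac{1}{n+1}\bigl(\delta^k_i\,\mathrm{tr}\{A(\vec v_j,\cdot)\}+\delta^k_j\,\mathrm{tr}\{A(\vec v_i,\cdot)\}\bigr),$$
which is precisely the $P^k_{ij}$ of (iii). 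Since Oda's theorem asserts that $u_0 = J_f^{-1/(n+1)}$ always solves the system (\ref{sistema}) with $P^k_{ij}=S^k_{ij}f$, and since $J_f^{-1/(n+1)}=\exp(-\varphi/(n+1))$, (iii) follows.

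For sufficiency, assume (i)--(iii). Define $P^k_{ij}$ by the formula in (iii). I first verify the canonical-form condition $\sum_{j=1}^n P^j_{ij}=0$: using symmetry (i) together with (ii) one has $\sum_j a^j_{ij}=\sum_k a^k_{ki}=\mathrm{tr}\{A(\vec v_i,\cdot)\}=\partial_{z_i}\varphi$, and a direct expansion of $\sum_j P^j_{ij}$ cancels these terms. With the $P^0_{ij}$ chosen so that the integrability conditions of \cite{Y84} hold, Oda's theorem produces a locally biholomorphic $F:\Omega\to\C^n$ whose Schwarzians $S^k_{ij}F$ equal the prescribed $P^k_{ij}$. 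By hypothesis (iii), $\exp(-\varphi/(n+1))$ is a solution of the associated system, so by Lemma~\ref{2.3} I can choose $f=\vec u/u_0$ with $u_0=\exp(-\varphi/(n+1))$, and this $f$ is a M\"obius composition of $F$; in particular $S_f = S_F$ and $J_f^{-1/(n+1)}=u_0$, so $\log J_f = \varphi$ up to an additive constant.

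Finally I check $P_f = A$ by substituting back into (\ref{free-coord-u_0}): the $S_f$-piece contributes the $P^k_{ij}$ from (iii), and the two gradient-of-$\log J_f$ pieces contribute $\tfrac{1}{n+1}(\delta^k_i\,\partial_{z_j}\varphi+\delta^k_j\,\partial_{z_i}\varphi)$, which by (ii) is exactly the correction term that was subtracted from $a^k_{ij}$ in the definition of $P^k_{ij}$. They cancel to give $(P_f)^k_{ij}=a^k_{ij}$. The step I expect to be the main obstacle is the bookkeeping of matching the \emph{particular} solution $u_0=\exp(-\varphi/(n+1))$ in Lemma~\ref{2.3} (not an arbitrary one) so that the reconstructed $f$ has the correct jacobian, since any M\"obius post-composition would alter $J_f$ and hence $P_f$; condition (iii) is precisely what pins this choice down, and verifying that Oda's integrability hypotheses are consistent with (ii) is the only place where one must be careful.
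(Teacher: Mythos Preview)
Your argument is correct and follows essentially the same route as the paper: translate the prescription of $P_f$ into a prescription of the Oda Schwarzians $P^k_{ij}$ via (\ref{free-coord-u_0}), verify the canonical-form condition, invoke Oda/Yoshida to produce $f$, and use condition (iii) together with Lemma~\ref{2.3} to pin down $J_f^{-1/(n+1)}=\exp(-\varphi/(n+1))$ so that the gradient terms recombine to give $P_f=A$. If anything, you are more explicit than the paper in the necessity direction (where the paper simply says ``easy to see'') and in citing Lemma~\ref{2.3} for the jacobian-matching step.
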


\begin{proof} Using $(i)$ and $(ii)$ we have that
$$\mbox{tr}\{A(z)(\lambda,\cdot)\}=\nabla \varphi(z)\cdot \lambda\,.$$ For given $A(z)$ we can construct a bilinear mapping $\Lambda(z)(\lambda,\mu)$ as $$\Lambda(z)(\lambda,\mu)=A(z)(\lambda,\mu)-\frac{1}{n+1}\mbox{tr}\{A(z)(\lambda,\cdot)\}\mu-\frac{1}{n+1}\mbox{tr}\{A(z)(\mu,\cdot)\}\lambda\,.$$ Each component of $\Lambda(z)$ is $P^k_{ij}$ defined by
$$a^k_{ij}(z)-\frac{1}{n+1}\left(\delta^k_i\, \mbox{tr}\,\{A(z)(\vec v_j,\cdot)+\delta^k_j\, \mbox{tr}\,\{A(z)(\vec v_i,\cdot)\right)\,.$$ These coefficients satisfy $\sum_i P^k_{ik}=0$ for all $k=1,\ldots,n$. Now we define coefficients $P^0_{ij}$ in terms of $P^k_{ij}$ with $k=1,\ldots,n$ such that the integrability conditions in \cite{Y84} hold, (see pages 129-130). Thus, the system (\ref{sistema}) is completely integrable and in canonical form. Hence we can construct a function $f$ such that $S_f(z)=\Lambda(z)$. By $(iii)$ we have that
$$J_f^{-1/n+1}=\exp(-\displaystyle\frac{\varphi}{n+1})\,.$$ As $S_f$ is defined by equation (\ref{free-coord-u_0}) we conclude that $$\mbox{tr}\,\{A(z)(\lambda,\cdot)\}=\frac{1}{n+1}\nabla J_f(z)\cdot\lambda\,,$$ which implies that
$$P_f(z)=(Df(z))^{-1}D^2f(z)(\cdot,\cdot)=A(z)(\cdot,\cdot)\,.$$ Reciprocally, is easy to see that $P_f(z)$ satisfies (i), (ii) and (iii).
\end{proof}
Observe that $\alpha[Df(z)]^{-1}D^2f(z)(\vec v,\cdot)$ by locally
biholomorphic function $f$, satisfies $(i), (ii)$ and $(iii)$ of the
Theorem 3.4.
\begin{defn} Let $f$ be a locally biholomorphic mapping in $\Omega$ such that $f(0)=0$ and $Df(0)=Id$. We define
$f_\alpha$ in $\Omega$ as the locally biholomorphic mapping for
which
\begin{equation}\label{dfd^2falpha}[Df_\alpha(z)]^{-1}D^2f_\alpha(z)(\cdot,\cdot)=\alpha[Df(z)]^{-1}D^2f(z)(\cdot,\cdot)\,,\end{equation}
and $f_\alpha(0)=0$, $Df_\alpha(0)=Id$.
\end{defn}

As a generalization of the problem raised in \cite{DSS66}, one can
ask the question of determining the values of $\alpha$ for which the
mapping $f_{\alpha}$ is univalent when $f$ is univalent or even just
locally univalent. A partial answer is given below when $f$ is
convex in the unit ball $\B^n$. Theorem 3.5 shows another partial
result for compact linear invariant families. Since the class of
univalent mappings in $\B^n$ fails to be compact ($n>1$), we think
it is unlikely that there exists an $\alpha_0>0$ small enough so
that $f_{\alpha}$ is univalent for any $|\alpha|\leq \alpha_0$ and
$f$ univalent in $\B^n$. An interesting compact family of univalent
mappings to consider would be the class $S_0$ of univalent mappings
in $\B^n$ that have a parametric representation.


\begin{ex}
\noindent Let $f(z_1,z_2)=(\phi_\alpha(z_1),\psi_\alpha(z_2))$ be a
locally univalent mapping defined in $\B^2$ such that
$\phi_\alpha(z_1)$ and $\psi_\alpha(z_2)$ are defined by the
equation (\ref{f_alpha}) where $\phi$ and $\psi$ are locally
univalent analytic mappings defined in the unit disc such that
$\phi(0)=\psi(0)=0$, $\phi' (0)=\psi'(0)=1$ and suppose that
$z=(z_1,z_2)\in \B^2$. Its Schwarzian derivatives satisfy
$$\begin{array}{cll}S^1_{11}f(z_1,z_2) & = & \displaystyle\frac{\phi_\alpha''}{\phi_\alpha'}(z_1)=\alpha \frac{\phi''}{\phi'}(z_1),\\ [0,4cm]
S^2_{22}f(z_1,z_2) & = &
\displaystyle\frac{\psi_\alpha''}{\psi_\alpha'}(z_2)=\alpha
\frac{\psi''}{\psi'}(z_2),\\ [0,4cm] S^1_{22}f(z_1,z_2) & = &
S^2_{11}f(z_1,z_2)=0\,.\end{array}$$ Now, let
$f(z)=(\psi(z_1),\phi(z_2))$. Then the corresponding mapping
$f_\alpha$ has the property that its Schwarzian derivatives are
$$\begin{array}{ccl}S^1_{11}f_\alpha(z_1,z_2)&=&\alpha S^1_{11}f(z_1,z_2)=\displaystyle\alpha \frac{\phi''}{\phi '}(z_1)\,,\\ [0.3cm]
S^2_{22}f_\alpha(z_1,z_2)& =& \alpha
S^2_{22}f(z_1,z_2)=\displaystyle\alpha
\frac{\psi''}{\psi'}(z_2)\,,\\ [0.3cm] S^1_{22}f_\alpha(z_1,z_2)& =
& S^2_{11}f_\alpha(z_1,z_2)=0\,.\end{array}$$ Therefore $\S f = \S
f_\alpha$ which implies that there exists a M\"{o}bius mapping $M$
such that $M\circ f=f_\alpha$. But $f(0)=0=f_\alpha(0)$,
$DF(0)=Id=Df_\alpha(0)$ and $\nabla \log J_f=\nabla \log
J_{f_\alpha}=\alpha \nabla \log J_f$, then $f=f_\alpha$. Thus
$$f(z)=(\phi(z_1),\psi(z_2))\Longrightarrow f_\alpha(z)=(\phi_\alpha(z_1),\psi_\alpha(z_2))\,,$$ where $\phi_\alpha$ and $\psi_\alpha$ are defined by (\ref{f_alpha}). By the way, in this example if $|\alpha|<1/4$ then $f_\alpha$ will be univalent in $\B^2$.  Moreover if $\phi(z_1)$ is a univalent mapping defined by (\ref{f_mu}) and $\psi(z_2)=z_2$, then the mapping $f(z)=(\phi(z_1),\psi(z_2))$ is univalent and the  corresponding mapping $f_\alpha$ is not univalent if $|\alpha|>1/3$ and $\alpha\neq 1$.
\end{ex}
In \cite{Kik} the author proved that $f:\B^n\to\C^n$ locally
biholomorphic mapping is convex if and only if
$1-\mbox{Re}\langle[Df(z)]^{-1}D^2f(z)(u,u),z\rangle>0$ for all
$z\in\B^n$ and $u\in\C^n$ with $\|u\|=1$. Thus, if $0\leq \alpha\leq
1$ then $f_\alpha$ is a convex mapping when $f$ is a convex mapping
since
\begin{equation}\label{convex-alpha}1-\mbox{Re}\langle[Df_\alpha(z)]^{-1}D^2f_\alpha(z)(u,u),z\rangle=1-\alpha
\mbox{Re}\langle[Df(z)]^{-1}D^2f(z)(u,u),z\rangle
>0\,.\end{equation}

\begin{ex} Let $f$ be a univalent function in $\D$. We consider the Roper-Suffridge extension (see\cite{RS}) to $\B^2$ of $f$ to the function
$$\Phi_f(z)=\left(f(z_1),\sqrt{f'(z_1)}z_2\right)\,.$$ Thus, $$[D\Phi_f(z)]^{-1}[D^2\Phi_f(z)](\vec v,\cdot)=\begin{pmatrix}
    \frac{f''}{f'}(z_1)v_1  & 0   \\
     \frac 12z_2Sf(z_1)v_1+\frac 12\frac{f''}{f'}(z_1)v_2 &  \frac 12\frac{f''}{f'}(z_1)v_1\end{pmatrix}\,.$$ A straightforward  calculation shows that $$(\Phi_f)_\alpha(z)=\left(f_\alpha(z_1),z_2\sqrt{f'_\alpha(z_1)}+y(z_1)\right)\,,$$ where $f_\alpha$ is defined by equation (\ref{f_alpha}) and $y$ satisfies that $$y''-\alpha\frac{f''}{f'}y'=\frac{\alpha(\alpha-1)}{4}\left(\frac{f''}{f'}\right)^2(f')^{\alpha/2}\,.$$ Moreover $\Phi_f$ is univalent when $f$ is univalent, in fact if $f$ is convex then $\Phi_f$ is convex. On the other hand, $(\Phi_f)_\alpha$ is univalent if $f_\alpha$ is univalent which holds for $|\alpha|\leq 1/4$ for all univalent mappings $f$.
\end{ex}

\begin{thm} Let $f:\B^n\to \C^n$ be a locally biholomorphic mapping such that the norm order of the linear invariant family generated by $f$ is $\beta<\infty$. Then $f_\alpha$ is univalent if $|\alpha|\leq \displaystyle\frac{1}{2\beta+1}$.
\end{thm}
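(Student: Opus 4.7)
The plan is to combine two tools: Pfaltzgraff's several-variable Becker-type univalence criterion in $\B^n$, and the standard distortion estimate obeyed by every element of a linear invariant family of finite norm order. I would appeal first to the criterion of \cite{Pf74}: if $h:\B^n\to\C^n$ is locally biholomorphic and
\[
(1-\|z\|^2)\,\bigl\|[Dh(z)]^{-1}D^2h(z)(z,\cdot)\bigr\| \le 1 \qquad (z\in\B^n),
\]
then $h$ is univalent on $\B^n$. The goal is to verify this inequality for $h=f_\alpha$.

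By the defining property of $f_\alpha$, the preSchwarzian of $f_\alpha$ equals $\alpha$ times that of $f$, so
\[
(1-\|z\|^2)\,\bigl\|[Df_\alpha(z)]^{-1}D^2f_\alpha(z)(z,\cdot)\bigr\|
= |\alpha|\,(1-\|z\|^2)\,\bigl\|[Df(z)]^{-1}D^2f(z)(z,\cdot)\bigr\|,
\]
and it suffices to bound the right-hand factor uniformly on $\B^n$ by $2\beta+1$. This is the content of the standard LIF distortion estimate: composing $f$ with the M\"obius automorphism $\phi_z$ of $\B^n$ sending $0$ to $z$ and renormalizing produces a new element $F_z$ of the LIF with $F_z(0)=0$ and $DF_z(0)=I$, and the norm-order hypothesis gives $\tfrac12\|D^2 F_z(0)\|\le \beta$. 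Expanding $D^2 F_z(0)$ via the chain rule and recombining the contributions of $D\phi_z(0)$ and $D^2\phi_z(0)$ yields
\[
(1-\|z\|^2)\,\bigl\|[Df(z)]^{-1}D^2f(z)(z,\cdot)\bigr\| \le 2\beta\|z\|+\|z\|^2 \le 2\beta+1.
\]

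Inserting this estimate into the previous display gives
\[
(1-\|z\|^2)\,\bigl\|[Df_\alpha(z)]^{-1}D^2f_\alpha(z)(z,\cdot)\bigr\| \le |\alpha|(2\beta+1) \le 1
\]
whenever $|\alpha|\le 1/(2\beta+1)$, and Pfaltzgraff's criterion then yields that $f_\alpha$ is univalent on $\B^n$. The delicate point is the distortion step for the linear invariant family: one must carry out the chain-rule expansion of $D^2 F_z(0)$ carefully, tracking the derivatives of the ball automorphism $\phi_z$, so that exactly the constant $2\beta+1$ (rather than a weaker bound) emerges in the uniform estimate. Once that bookkeeping is in hand, the theorem follows as a one-line consequence of the chain rule for the preSchwarzian combined with the univalence criterion.
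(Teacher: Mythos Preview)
Your overall plan---Koebe transform, chain-rule expansion of the second derivative, LIF norm-order bound, then a Becker-type univalence criterion---is exactly the skeleton of the paper's argument. The gap is in the specific distortion inequality you invoke and, correspondingly, in the choice of univalence criterion.

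The bound
\[
(1-\|z\|^2)\,\bigl\|[Df(z)]^{-1}D^2f(z)(z,\cdot)\bigr\| \le 2\beta\|z\|+\|z\|^2
\]
is not what the Koebe-transform computation produces, and in fact it is false already for $n=1$: the Koebe function $k(z)=z/(1-z)^2$ (order $\beta=2$) gives $(1-r^2)\,r\,|k''/k'|=4r+2r^2$, which exceeds $2\beta r+r^2=4r+r^2$ for every $r>0$. What the chain-rule expansion actually yields is an estimate of the form
\[
\bigl\|(1-\|\zeta\|^2)[Df(\zeta)]^{-1}D^2f(\zeta)(\zeta,\cdot)+\|\zeta\|^2 I\bigr\|\le 2\beta+1,
\]
with the term $\|\zeta\|^2 I$ sitting \emph{inside} the norm (it comes from $[D\phi(0)]^{-1}D^2\phi(0)$). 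If you pull that term outside by the triangle inequality you lose a unit and only get $|\alpha|\le 1/(2\beta+2)$ from Pfaltzgraff's criterion.

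This is why the paper does not use the plain Pfaltzgraff--Becker test but instead the generalized Ahlfors--Becker criterion recorded in Graham--Kohr: if $|c|\le 1$ and
\[
\bigl\|c\|z\|^2 I+(1-\|z\|^2)[Dh(z)]^{-1}D^2h(z)(z,\cdot)\bigr\|\le 1\qquad(z\in\B^n),
\]
then $h$ is univalent. Multiplying the displayed LIF estimate by $\alpha$ and taking $c=\alpha$ gives the bound $|\alpha|(2\beta+1)\le 1$ directly, with no loss. So your outline is right, but to reach the stated constant $1/(2\beta+1)$ you must replace Pfaltzgraff's criterion by the Ahlfors-type criterion with the extra $c\|z\|^2 I$ term and keep that term inside the norm throughout the computation.
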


\begin{proof}  Let $\phi$ be a automorphism of $\B^n$ such that $\phi(0)=\zeta$.
The mapping
$g(z)=D\phi(0)^{-1}Df(\phi(0))^{-1}(f(\phi(z))-f(\phi(0)))$ belongs
to the family generated by $f$, therefore $\|D^2g(0)\|\leq \beta$.
But $$
\begin{array}{cll}D^2g(0)(\cdot,\cdot)& = & D\phi(0)^{-1}Df(\zeta)^{-1}Df(w)D^2\phi(0)(\cdot,\cdot)\,+\\
& &
D\phi(0)^{-1}Df(\zeta)^{-1}D^2f(\zeta)(D\phi(0)(\cdot),D\phi(0)(\cdot))\,.\end{array}$$
Evaluating in $D\phi(0)^{-1}(\zeta)=\zeta /(1-\|\zeta\|^2)$,
multiplication by $\alpha$ and using (\ref{dfd^2falpha}) we have
that
$$ \begin{array}{cll}\alpha D^2g(0)(\zeta,\cdot)& = & \alpha D\phi(0)^{-1}Df(\zeta)^{-1}Df(\zeta)D^2\phi(0)(\zeta,\cdot)\,+\\
& &
(1-\|\zeta\|^2)D\phi(0)^{-1}Df_\alpha(\zeta)^{-1}D^2f_\alpha(\zeta)(\zeta,D\phi(0)(\cdot))\,,\end{array}$$
where
$D\phi(0)^{-1}D^2\phi(\zeta,\cdot)=-\|\zeta\|^2(\cdot)-\zeta\zeta^{*}(\cdot)$.
Thus, for all vectors $v=D\phi(0)^{-1}(u)$ it follows that
$$\begin{array}{lll} (1-\|\zeta\|^2)Df_\alpha(\zeta)^{-1}D^2f_\alpha(\zeta)(\zeta,u)& = & \alpha D\phi(0)D^2g(0)(\zeta,v)-\alpha \|\zeta\|^2u\\
& & -\alpha(1-\|\zeta\|^2)\zeta\zeta^{*}v\,.\end{array}$$ Then
taking supremum over all vectors $u$ with norm $\|u\|=1$, we have
that
$$\|(1-\|\zeta\|^2)Df_\alpha(\zeta)^{-1}D^2f_\alpha(\zeta)(\zeta,\cdot)+\alpha\|\zeta\|^2I\|\leq
|\alpha|(2\beta+1)\,,$$ hence by the generalization of Ahlfors and
Becker result (see page 350 in \cite{GK}) we conclude that
$f_\alpha$ satisfies the hypothesis of this theorem so is univalent
in $\B^n$.

\end{proof}

The last corollary is an immediate consequence.

\begin{cor} Let $\mathcal{F}$ be a linearly invariant family of locally
biholomorphic mappings defined in $\B^n$ of finite order $\beta$.
Then $f_{\alpha}$ is univalent in $\B^n$ for every $f\in\mathcal{F}$
and $|\alpha|\leq \displaystyle\frac{1}{2\beta+1}$.
\end{cor}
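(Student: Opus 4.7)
The plan is to deduce the corollary directly from Theorem 3.5 by a monotonicity argument on the norm order. The key observation is that if $\mathcal{F}$ is a linearly invariant family containing $f$, then the linear invariant family generated by $f$, call it $\Lambda[f]$, is by definition the smallest linearly invariant family containing $f$ and hence $\Lambda[f]\subseteq \mathcal{F}$. Since the norm order $\mathrm{ord}(\mathcal{G})=\sup_{g\in\mathcal{G}}\tfrac12\|D^2g(0)\|$ is manifestly monotone with respect to inclusion, this containment yields $\mathrm{ord}(\Lambda[f])\leq \mathrm{ord}(\mathcal{F})=\beta<\infty$ for every $f\in\mathcal{F}$.

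Next, for any such $f$, Theorem 3.5 applies with the parameter $\beta$ replaced by $\mathrm{ord}(\Lambda[f])$, and the conclusion is that $f_\alpha$ is univalent in $\B^n$ whenever $|\alpha|\leq \tfrac{1}{2\,\mathrm{ord}(\Lambda[f])+1}$. Because $\mathrm{ord}(\Lambda[f])\leq \beta$, we have $\tfrac{1}{2\beta+1}\leq \tfrac{1}{2\,\mathrm{ord}(\Lambda[f])+1}$, so the hypothesis $|\alpha|\leq \tfrac{1}{2\beta+1}$ is more restrictive and certainly suffices. Applying this uniformly over $f\in\mathcal{F}$ gives the corollary.

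There is essentially no obstacle: the only subtle point is making sure that the finiteness of $\mathrm{ord}(\mathcal{F})$ really transfers to $\mathrm{ord}(\Lambda[f])$ for each individual $f$, but this is immediate from the inclusion $\Lambda[f]\subseteq \mathcal{F}$ together with the supremum definition of the norm order. No new estimate on $f_\alpha$ beyond the one established in the proof of Theorem 3.5 is needed, which is why this is stated as an immediate consequence.
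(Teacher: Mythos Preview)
Your argument is correct and is precisely the ``immediate consequence'' the paper has in mind: the inclusion $\Lambda[f]\subseteq\mathcal{F}$ gives $\mathrm{ord}(\Lambda[f])\leq\beta$, and then Theorem~3.5 applies with the possibly smaller order, so the bound $|\alpha|\leq 1/(2\beta+1)$ certainly suffices. The paper gives no further details beyond declaring the corollary immediate, and your monotonicity reasoning is exactly what is intended.
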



\noindent Acknowledgements: I would like to thank Martin Chuaqui and
John Pfaltzgraff for their useful suggestions and valuable
discussions.

\bibliographystyle{amsplain}

\medskip
\noindent Facultad de Ciencias y Tecnolog\'ia, Universidad Adolfo
Ib\'a\~nez, Av. Balmaceda 1625 Recreo, Vi\~na del Mar, Chile,\,
\email{rodrigo.hernandez@uai.cl}

\end{document}